\newtheorem{question}{Question}
\newtheorem{problem}{Problem}
\theoremstyle{plain}
\newtheorem{theorem}{Theorem}[section]
\newtheorem{lemma}[theorem]{Lemma}
\newtheorem{corollary}[theorem]{Corollary}
\newtheorem{proposition}[theorem]{Proposition}
\theoremstyle{definition}
\newtheorem{definition}[theorem]{Definition}
\newtheorem{example}[theorem]{Example}
\newtheorem{remark}[theorem]{Remark}
\theoremstyle{remark}
\def\fork{\mathrel{\raise0.2ex\hbox{\ooalign{\hidewidth$\vert$\hidewidth\cr\raise-0.9ex\hbox{$\smile$}}}}}
\newcommand{\p}{\mathfrak{p}}
\renewcommand{\a}{\mathfrak{a}}
\newcommand{\m}{\mathfrak{m}}
\newcommand{\Z}{\mathbb{Z}}
\newcommand{\ev}{_{\overline{0}}}
\newcommand{\od}{_{\overline{1}}}
\newcommand{\ann}{\mathrm{Ann}}
\newcommand{\J}{\mathcal{J}}
\newcommand{\kdim}{\mathrm{Kdim}}
\newcommand{\ksdim}{\mathrm{Ksdim}}
\newcommand{\nil}{\mathrm{Nil}}
\newcommand{\sdim}{\mathrm{sdim}}
\newcommand{\K}{\Bbbk} 
\subjclass[2020]{16W50, 16U30,  16W55, 	16L30}
\begin{document} 
\setstretch{1}

\title{A note on unique Factorization in superrings} 

\author{Pedro  Rizzo }

\address{Instituto de Matemáticas, FCEyN\\Universidad de Antioquia}
\email{pedro.hernandez@udea.edu.co}

\author{Joel Torres del Valle }

\address{Instituto de Matemáticas, FCEyN\\Universidad de Antioquia}
 
\email{joel.torres@udea.edu.co}

\author{Alexander Torres-Gomez }

\address{Instituto de Matemáticas, FCEyN\\Universidad de Antioquia}
 
\email{galexander.torres@udea.edu.co}

\date{\today}

\maketitle

\begin{abstract} 
In the realm of supercommutative superrings, this article investigates the  unique factorization of elements. We build upon recent findings by Naser et. al. concerning similar results  in noncommutative symmetric  rings with zerodivisors, delving deeper into the ramifications. Strikingly, we demonstrate that any unique  factorization superdomain necessarily takes the form of a superfield,  further characterizing them as Artinian superrings with Krull superdimension $0\mid d$. Furthermore, we discover that a straightforward analogue   of the well-known Auslander-Buchsbaum Theorem does not hold true in the supercommutative setting.  
\\

\noindent \textsc{Key words:}  Unique factorization superrings, superfields, Artinian superrings.
\end{abstract} 


\section{Introduction}


The fundamental concept of factorization has long been a cornerstone of algebraic structures, particularly  within the realm of commutative rings. At its heart lies the notion of  unique factorization rings, where every non-zero, non-unit element can be expressed uniquely (up to units and multiple factors order) as the product  of irreducible elements. This concept has been extensively explored across diverse settings: commutative rings without zerodivisors (\cite[p.13]{eisenbud}), noncommutative rings without zerodivisors (\cite{cohn1963noncommutative}), commutative rings with zerodivisors (\cite{galovich1978unique}) and, most notably, noncommutative symmetric rings with zerodivisors (\cite{naser2021non}). Yet, within the domain of supercommutative superrings, this facet of factorization remains uncharted territory. Motivated by \cite{naser2021non}, this work embarks on a parallel journey for superrings (where the noncommutative rings are $\Z_2$-graded and supercommutative but not symmetric in general), replicating the results and unveiling their intriguing consequences. One striking departure from the familiar terrain of commutative algebra emerges: unlike the usual characterization of Dedekind domains as unique factorization domains if and only if they are principal ideal domains, this equivalents crumbles in the supercommutative context. Indeed, we prove that a unique factorization superdomain most necessarily be a superfield  (Theorem \ref{thm:3.3} i)). Furthermore, focusing on the local case, we reveal that any  Noetherian unique factorization superring must be an Artinian superring with Krull superdimension  $0\mid d$, where $d$ is a non-negative integer (Theorem \ref{thm:sartin}). Notably, we discover that  an analogue of the well-known Auslander-Buchsbaum Theorem does not hold true in the supercommutative setting (Subsection \ref{AUSLANDER}).  

This paper is structured as follows. Section \ref{Section:2} establishes the requisite background material for our subsequent analysis, introducing key definitions and results from the theory of superrings.  Section \ref{Section:3} embarks on an rigorous exploration of superrings endowed with unique factorization, forging a crucial connection between this property and the defining characterization of Artinian superrings. Additionally, we present several illustrative examples and introduce a new property of superrings termed {\it oddly-Noetherian} (see the previous comment to Theorem \ref{Theorem:3.7}). Finally, Section \ref{comments} delves into a nuanced discussion of the ramifications and broader implications gleaned from the preceding section's results. 

\section{Background material}\label{Section:2}

Throughout this paper, we work with the group of integers module 2, denoted by $\Z_2$. We restrict our attention to unitary and  non-trivial rings (i.e., $1\neq 0$). Furthermore, we assume that $2$ is a non-zerodivisor in every ring considered.

\begin{definition}\label{Definition:2.1}
    A $\Z_2$-graded unitary ring $R=R\ev\oplus R\od$ is called a \emph{superring}. 
\end{definition} 

Let $R$ be a superring. We define the \textit{parity} of $x\in R$ as

\[
|x|:=\begin{cases}
    0& \text{ if }\quad x\in R\ev,\\
    1& \text{ if }\quad x\in R\od.
\end{cases}
\]

The set $h(R):=R\ev\cup R\od$ is called the \textit{homogeneous elements of $R$} and each $x\in h(R)$ is said to be  \textit{homogeneous}.  A homogeneous nonzero element $x$ is called \emph{even} if $x\in R\ev$ and \textit{odd} if $x\in R\od$. 

Any superring $R$ in this paper is  \textit{supercommutative}, i.e., 

\begin{equation}\label{Supercommutativity}
    xy=(-1)^{|x||y|}yx,\quad \text{ for all }\quad x, y\in h(R).
\end{equation}

\noindent Note that from \eqref{Supercommutativity}, it immediately follows that $R\ev$ is a commutative ring and every element of $R\od$ squares to zero. 

\begin{definition}\label{Definition:2.2}
Let $R$ be a superring. 

\begin{enumerate}[itemsep=2pt]
\item[i)] A \textit{superideal} of $R$ is a $\Z_2$-graded ideal  $\a$ of $R$, that is, it admits a decomposition $$\a=(\a\cap R\ev)\oplus(\a\cap R\od).$$
\item[ii)] An ideal $\p$ of $R$ is \textit{prime} (resp. \textit{maximal}) if $R/\p$ is an integral domain (resp. a field).
\end{enumerate}
\end{definition}

\begin{remark}\label{Remark:2.3} Let $R$ be a superring. 

\begin{enumerate}[itemsep=2pt]
\item[i)] If $\a$ is a superideal of $R$, we denote $\a_i:=\a\cap R_i$, for each $i\in\Z_2$.
\item[ii)]  Any superideal $\a$ of $R$ is a two-sided ideal.
\item[iii)] If $\a$ is a superideal of $R$, it is easy to show that the quotient ring $R/\a=(R/\a)\ev\oplus(R/\a)\od$ becomes a superring with $\Z_2$-gradding given by $(R\ev/\a\ev)\oplus(R\od/\a\od)$.
\item[iv)] The definition of prime ideal is equivalent to requiring that if $x, y$ are (homogeneous) elements in $R$ whose product is in $\p$, one of them is in $\p$ (\cite[Lemma 4.1.2]{westrathesis}). In other words, any prime ideal is completely prime.  
\item[v)] Any prime ideal of $R$ is a superideal of the form $\p=\p\ev\oplus R\od$, where $\p\ev$ is a prime ideal of $R\ev$. Similarly, any maximal ideal of $R$ is a superideal of the form $\m=\m\ev\oplus R\od$, where $\m\ev$ is a maximal ideal of $R\ev$ (\cite[Lemma 4.1.9]{westrathesis}).
\end{enumerate}
\end{remark}

\begin{definition}\label{Definition:2.4} Let $R$ be a superring. 

\begin{itemize}[itemsep=2pt]
    \item[i)] The ideal $\J_R=R\cdot R\od=R\od^2\oplus R\od$ is called the \textit{canonical superideal of $R$}.
    \item[ii)] The \textit{superreduction of} $R$ is the commutative ring $\overline{R}=R/\J_R$. 
    \item[iii)] $R$ is called a \textit{superdomain} if $\overline{R}$ is a domain or, equivalently, $\J_R$ is a prime ideal. 
    \item[iv)] $R$ is called a \textit{superfield} if $\overline{R}$ is a field or, equivalently, $\J_R$ is a maximal ideal.
        \item[v)] The \textit{Jacobson radical} of $R$, denoted by $\mathfrak{r}_R$, is defined as the  intersection of all maximal ideals of $R$.
    \item[vi)]  $R^{\times}$ denotes the \textit{group of units} of $R$.
    \item[vii)] $R$ is said to be  \emph{local} if it has a unique maximal ideal, or, equivalently, $R-R^{\times}$ is an ideal.
    \item[viii)] $\nil(R)$ represents the ideal of nilpotent elements of $R$. 
\end{itemize}
\end{definition} 

\begin{remark}
If $R$ is a superdomain, then $\nil(R)=\J_R$.
\end{remark}


\section{Main findings}\label{Section:3}


This section presents the principal findings of our work, divided into two subsections (unique factorization superrings and Artinian superrings). We first introduce the notion of unique factorization superrings (UFSR). We then prove that if a superdomain $R$ is a UFSR, then every irreducible element of $R$ is nilpotent. As a direct consequence, $R$ is a local superring with maximal ideal $\mathfrak{r}_R$, which is  nilpotent when $R$ is also \textit{oddly-Noetherian}. In the second subsection, we prove that any Noetherian superdomain that is a UFSR is also an Artinian superring.  

\begin{definition}\label{Definition:2.5}
    Let $R$ be a superring and let $a, b, p\in R$.

\begin{itemize}[itemsep=2pt]
    \item[i)]  We say that $a$ \textit{divides} $b$ (or $a$ is a \textit{factor} of $b$), denoted $a\mid b$, if there exist $c, d\in R$ such that $b=cad$.
    \item[ii)]  We say that $a$ is \textit{regular} if it is not a zerodivisor of $R$. In other words, for every nonzero $x\in h(R)$, the product $ax\neq0$ (cf. \cite[Corollary 3.1.4]{westrathesis}).
    \item[iii)] We say that $a$ is \textit{associated} to $b$ if there exist $u, v\in R^{\times}$ such that $a=ubv$. 
    \item[iv)] We say that $a$ is \textit{normal} if $aR=Ra$
    \item[vi)] We call a normal element $p\in R$ a \textit{prime} element if, for any $a, b\in R$; $p\mid ab$ implies that either $p\mid a$ or $p\mid b$.
    \item[vii)]  A non-unit element is said to be \textit{irreducible} if it cannot be expressed as the product of two non-unit elements.
\end{itemize}
\end{definition}

\subsection{Unique factorization superrings} \textit{ }
\medskip

Henceforth, we restrict our attention to superrings $R$ with a non-trivial odd part, meaning, $R\od\neq0$. This assumption is motivated by the fact that if $R\od=0$, then $R$ reduces to a commutative ring with zerodivisors, a class  studied in  \cite{galovich1978unique}. For defining unique factorization in superrings, we can leverage the concept from noncommutative ring presented in \cite{naser2021non} to supercommutative context.

\begin{definition}\label{def:ufsr}
    A \textit{unique factorization superring} (UFSR) is a superring $R$ satisfying the following two conditions:

    \begin{itemize}[itemsep=2pt]
        \item[i)]  Every nonzero, non-unit element $x\in R$ admits a finite factorization $$x=f_1\cdots f_d$$ into  normal irreducible elements 
        $f_1, \ldots, f_d$ of $R$. These elements $f_i$ are called the \textit{factors} of $x$.
        
        \item[ii)]  If $x$ possesses two distinct factorizations into normal irreducible elements as in i),  say $$x=f_1\cdots f_d=g_1\cdots g_n,$$ then $d=n$ and there exists a permutation $\sigma\in\mathbb{S}_n$, such that $f_i$ is associated to $g_{\sigma(i)}$, for all $i=1, 2,\ldots, n$.
    \end{itemize}
\end{definition}

To introduce some illustrative examples, we first define polynomial superrings. Let $\K$ be a field. We then consider the following ring

\begin{eqnarray}\label{eqn:kalg}
    R & = & \K[X_1, \ldots, X_s \mid  \theta_1, \ldots, \theta_d] \nonumber \\ 
      &:= & \K\langle Z_1, \ldots, Z_s, Y_1, \ldots, Y_d\rangle/(Y_iY_j+Y_jY_i, Z_iZ_j-Z_jZ_i, Z_iY_j-Y_jZ_i).
\end{eqnarray}

This $\K$-superalgebra is called the \textit{polynomial superalgebra over} $\K$, with \textit{even indeterminants} $X_i$'s and \textit{odd indeterminants} $\theta_i$'s. The element $\theta_i$ corresponds to the image of $Y_i$ and $X_j$ corresponds to the image of $Z_j$ in the quotient (\ref{eqn:kalg}). \\

The super-reduced of $R=\K[X_1, \ldots, X_s \mid  \theta_1, \ldots, \theta_d]$ is $\overline{R}\cong\K[X_1, \ldots, X_s]$ and it is not hard to check that any element of $R$ can be written in the form 

\vspace{0.2cm}

\[
f=f_{i_0}(X_1, \ldots, X_s)+\sum_{J\,:\,\text{even}}f_{i_1\cdots i_J}(X_1, \ldots, X_s)\theta_{i_1}\cdots\theta_{i_J}+\sum_{J\,:\,\text{odd}}f_{i_1\cdots i_J}(X_1, \ldots, X_s)\theta_{i_1}\cdots\theta_{i_J},
\]

\vspace{0.2cm}

\noindent where $f_{i_0}, f_{i_1\cdots i_J}\in\K[X_1, \ldots, X_s]$ for all $J$. 

\begin{example}[The dual numbers]\label{Ex:3.3}
    Let $\mathbb{R}$ be the field of real numbers. We consider $R=\mathbb{R}[\varepsilon]$, where $\varepsilon$ is an odd variable. Elements of $R$ are then of the form $\alpha=\alpha_0+\alpha_1\varepsilon$, where $\alpha_0, \alpha_1\in\mathbb{R}$. Here, the even and odd elements are $$R\ev=\mathbb{R}\quad\text{and}
    \quad R\od=\varepsilon\mathbb{R}.$$ Furthermore, the set of units $$R^\times=\{\alpha_0+\alpha_1\varepsilon\in R\mid \alpha_0\neq0\}.$$ The inverse of an element $\alpha=\alpha_0+\alpha_1\varepsilon$ (with $\alpha_0\neq0)$ is given by $$\alpha^{-1}=\alpha_0^{-1}-[\alpha_1(\alpha_0^{-1})^2]\varepsilon.$$ The non-units of $R$ are those elements of the form $\alpha
        _1\varepsilon$, with $\alpha_1\in\mathbb{R}$. Since $\varepsilon^2=0$, any non-unit is irreducible. Therefore, $R$ is a UFSR. 
\end{example}

The following example generalizes the construction of dual numbers from Example \ref{Ex:3.3} to an arbitrary field and number of odd variables.

\begin{example}\label{Ex:3.4} 
    Let $\Bbbk$ be a field and consider $N$ odd variables $\theta_1, \ldots, \theta_N$.  We define a ring $R$ as follows

        \begin{eqnarray}\label{eqn:dualgen}
            R &=&\Bbbk[\varepsilon_1, \varepsilon_2, \ldots, \varepsilon_N] \nonumber\\
            &:=&\Bbbk[\theta_1, \theta_2, \ldots, \theta_N]/(\theta_i\theta_j\mid i, j=1, \ldots, N), 
        \end{eqnarray}

         \noindent where each $\varepsilon_j$ corresponds to the image of $\theta_j$ in the quotient (\ref{eqn:dualgen}). This ring consist of elements of the form $$\alpha=\alpha_0+\alpha_1\varepsilon_1+\cdots+\alpha_N\varepsilon_N,\quad \text{where }\quad \alpha_0, \alpha_1, \ldots, \alpha_N\in\Bbbk.$$ We have $$R\ev=\Bbbk\quad\text{and}\quad R\od=\varepsilon_1\Bbbk+\cdots+\varepsilon_N\Bbbk.$$ 
        An element $\alpha$ in $R$ is a unit if $\alpha_0\neq0$. The inverse of such a unit is given by $$\alpha^{-1}=\alpha_0^{-1}-[\alpha_1(\alpha_0^{-1})^2]\varepsilon_1-\cdots-[\alpha_N (\alpha_0^{-1})^2]\varepsilon_N.$$ The non-units of $R$ are those elements where  $\alpha_0=0$. Since $\varepsilon_i\varepsilon_j=0$ for all $i, j=1, 2, \ldots, N$,  any non-unit is irreducible. Therefore, $R$ is a UFSR. This property holds true even if we consider an infinite number of odd variables $\varepsilon_i$'s. 
\end{example} 

\begin{example}[A non-supercommutative UFSR]
     Let 

         \[
         R=\begin{pmatrix}
             \mathbb{Z}&\mathbb{Q}\\0&\mathbb{Z}
         \end{pmatrix}=\left\{\left.\begin{pmatrix}
             a_{11}&a_{12}\\0&a_{22
             }
         \end{pmatrix}\right\vert a_{12}, a_{22}\in\mathbb{Z}, a_{12}\in\mathbb{Q}\right\}.
         \]

             \noindent Thus, $R$ is a unique factorization ring \cite[Example 12]{naser2021non}. Furthermore, it is not hard to see that $R$ is a (non-supercommutative) superring with

         \[
         R\ev=\begin{pmatrix}
             \mathbb{Z}&0\\0&\mathbb{Z}
         \end{pmatrix}\quad\text{and}\quad R\od=\begin{pmatrix}
             0&\mathbb{Q}\\0&0
         \end{pmatrix}.
         \]

         \noindent Note that $R\od^2=0$.
\end{example}

\begin{example}\label{Example:3.6} 
    Consider odd variables $\theta_1, \theta_2$ and let $R=\Z_2[\theta_1, \theta_2]$. Then, 

    \begin{align*}
        R&=\{0, 1, \theta_1\theta_2, 1+\theta_1\theta_2, \theta_1, \theta_2, \theta_1+\theta_2, 1+\theta_1, 1+\theta_2, 1+\theta_1+\theta_2,\theta_1+\theta_1\theta_2, \theta_2+\theta_1\theta_2, \\ 
        &\hspace{4cm} \theta_1+\theta_2+\theta_1\theta_2,1+\theta_1+\theta_1\theta_2, 1+\theta_2+\theta_1\theta_2, 1+\theta_1+\theta_2+\theta_1\theta_2\}.
    \end{align*}

    \noindent Then, $R$ is clearly a UFSR.

    \begin{itemize}
        \item The even elements are $$R\ev=\{0, 1, \theta_1\theta_2, 1+\theta_1\theta_2\}.$$
        
        \item The non-units of $R\ev$ are $0$ and $\theta_1\theta_2$ which is irreducible in $R\ev$. Hence, $R\ev$ is a UFR in the sense of Galovich \cite{galovich1978unique}.
\item The set of units of $R$ is 

\begin{align*}
    R^\times&=\{1, 1+\theta_1\theta_2, 1+\theta_1, 1+\theta_2, 1+\theta_1+\theta_2, 1+\theta_1+\theta_1\theta_2,\\
    &\hspace{7cm} 1+\theta_2+\theta_1\theta_2, 1+\theta_1+\theta_2+\theta_1\theta_2\}.
\end{align*}
        
        \item The non-units of $R$, 
        
        $$R-R^\times=\{0, \theta_1\theta_2, \theta_1, \theta_2, \theta_1+\theta_2,\theta_1+\theta_1\theta_2, \theta_2+\theta_1\theta_2, \theta_1+\theta_2+\theta_1\theta_2\},$$ 
        
        \noindent except $\theta_1\theta_2$, are irreducible.  Furthermore,  $\theta_1\theta_2$ is the product of normal irreducible elements, non-uniquely determined. Namely, $\theta_1\theta_2=\theta_1(\theta_1+\theta_2)$. Therefore,  $R$ is not a UFSR. 
    \end{itemize}
\end{example}

\begin{remark}\label{rmk:ufsrelem}
Drawing upon Propositions 3.4, 3.5, 3.6 and Lemma 3.7 in \cite{naser2021non}, we establish the following properties of UFSR.   
 
    \begin{itemize}[itemsep=2pt]
    \item[i)] A normal element  $p\in R$ is prime if and only if the ideal $pR$ is prime.

    \item[ii)] Every normal  irreducible element is prime. 
     
    \item[iii)] Every UFSR contains at least one nonzero normal irreducible element that is also a zerodivisor.
     
    \item[iv)] Every normal irreducible element is a zerodivisor. 
\end{itemize}
\end{remark}

\begin{remark}
The uniqueness of factorization is crucial for the conclusions developed in this work. Consider the ring $R=\Z[\varepsilon]$, where $\varepsilon$ is an odd variable. The units are then $$R^{\times}=\{\pm 1+b\varepsilon\mid b\in \Z\}.$$ Thus, an element $a+b\varepsilon$ is irreducible if the absolute value $|a|$ is a prime number. It is not hard to see that $R$ satisfies Definition \ref{def:ufsr} i). However, it fails condition ii). For example, for any $p$ prime, we have $$p^2=(p-\varepsilon)(p+\varepsilon)=(p-p\varepsilon)(p+p\varepsilon),$$ but these factors are not associates. Note that, even though these factorizations exits, the normal irreducible elements of $R$ are not necessarily zerodivisors. 
\end{remark}

\begin{remark}\label{Remark:3.4:UFSR}
    A noncommutative ring $A$ is called \emph{symmetric} if $abc=0$ implies $acb=0$, for all $a, b, c\in A$. An ideal $\a$ of $A$ is called \textit{symmetric} if $abc\in\a$ implies $acb\in\a$, for all $a, b, c\in A$. This is equivalent to requiring that if $a_1a_2\cdots a_n\in \a$ and $\sigma\in\mathbb{S}_n$ is a permutation, then $a_{\sigma(1)}a_{\sigma(2)}\cdots a_{\sigma(n)}\in\a$ (\cite[Proposition 2.1]{lambek_1971}). Additionally, since any prime ideal of a superring is completely prime, it follows that any prime ideal in a superring is symmetric. 
\end{remark}

Building upon the work of  Galovich (\cite[Proposition 6]{galovich1978unique}), Naser et. al. (\cite[Theorem 3.11]{naser2021non}) proved that any noncommutative symmetric UFR is a local ring with Jacobson ideal being the nilpotent maximal ideal. This section aims to establish a similar result for superrings, but under the more natural condition of \textit{oddly-Noetherian} instead of symmetry.
\medskip

The following lemma will be useful later. 

\begin{lemma}\label{Lemma:3.10} Let $R$ be a unique factorization superring, and $a\in R$ be an even irreducible element. Let $x\in R$ be a non-zero homogeneous element such that $ax=0$. Then, there is a non-negative integer $n$ such that $x=a^ny$, where $y=f_1\cdots f_d$, where all the $f_i$ $(i=1, 2, \ldots, d)$ are not associates normal irreducible elements, and none of them is associated to $a$.
\end{lemma}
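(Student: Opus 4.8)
The plan is to read off the factorization directly from Definition \ref{def:ufsr} and then to use the annihilation relation $ax=0$ to force the squarefreeness of the leftover part. First I note that $x$ cannot be a unit: if it were, then $ax=0$ would give $a=0$, contradicting that $a$ is irreducible (hence nonzero). Since $x$ is a nonzero non-unit, Definition \ref{def:ufsr} i) yields a factorization $x=g_1\cdots g_m$ into normal irreducible elements. I also record that $a$, being even, is central by supercommutativity \eqref{Supercommutativity} (for homogeneous $r$ one has $ar=(-1)^{|a||r|}ra=ra$), so in particular $a$ is normal and, by Remark \ref{rmk:ufsrelem} ii), prime.

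Next I extract the maximal power of $a$. Since $a$ is prime (Remark \ref{rmk:ufsrelem} i)--ii)), divisibility by $a$ is well behaved, and I may collect those $g_i$ that are associated to $a$: each such factor equals $w_i a$ for a unit $w_i$, using centrality of $a$ to absorb the right unit (from $g_i=u_iav_i$ we get $g_i=u_iv_i a$). Pushing all these central copies of $a$ to the front and absorbing the accumulated unit into one of the remaining factors, I obtain $x=a^{n}y$ with $y=f_1\cdots f_d$, where each $f_j$ is normal irreducible (an associate of one of the leftover $g_i$) and none of the $f_j$ is associated to $a$; here $n$ is exactly the number of factors of $x$ that are associates of $a$. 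Since $x\neq 0$, also $y\neq 0$.

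It remains to prove that the $f_j$ are pairwise non-associated, and this is where $ax=0$ enters: from $x=a^{n}y$ we get $a^{n+1}y=0$ while $a^{n}y=x\neq0$. Suppose two factors, say $f_i\sim f_j$ with $i<j$, were associated. I first dispose of the odd case. If $f_i$ (hence $f_j$) is odd, write $f_j=uf_iv$ with $u,v\in R^{\times}$; splitting $u=u\ev+u\od$ into homogeneous parts and using $f_i^2=0$ together with $f_iu\od=-u\od f_i$ from \eqref{Supercommutativity}, one computes $f_iuf_i=0$, hence $f_if_j=0$. Using normality of the $f_k$ to bring the factors $f_i$ and $f_j$ adjacent (regrouping a product of normal elements does not change its value), this forces $y=0$, a contradiction. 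Thus any repeated association class among the $f_j$ must consist of even irreducibles. The remaining, and main, obstacle is to rule out a repeated \emph{even} factor: if $f:=f_i\sim f_j$ is even, then by centrality $x=a^{n}f^{2}w$ for some product $w$ of the other factors, and I must derive a contradiction from $a^{n+1}f^{2}w=0$, $a^{n}f^{2}w\neq0$, and $a\nmid f$. I expect to close this case by showing that an even irreducible occurring here is itself square-zero—so that a repetition again forces $y=0$—using that $f$ is a zerodivisor and prime (Remark \ref{rmk:ufsrelem} ii),iv)) and that every prime ideal, in particular $fR$, contains $R\od$ (Remark \ref{Remark:2.3} v)); alternatively, a uniqueness-of-factorization argument applied to $a^{n+1}y=0$ against the maximality of $n$ should yield the same contradiction. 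This even-repetition step is the crux of the lemma.
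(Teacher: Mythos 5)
Your extraction step is precisely the paper's proof. The paper likewise factors $x=g_1\cdots g_s$ (tacitly using, as you make explicit, that $ax=0$ forces $x$ to be a nonzero non-unit), writes each factor $g_i=u_iav_i$ associated to $a$, uses the evenness of $a$ (hence centrality, as you note) to pull a copy of $a$ to the front and absorb $u_iv_i$ into the remaining factors, and iterates to get $x=a^nf_1\cdots f_d$ with no $f_i$ associated to $a$. Up to this point your proposal and the paper coincide word for word in substance.

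The step where you stall --- showing the $f_i$ are pairwise non-associated, which you call the crux --- is, in fact, not argued in the paper at all: its proof simply ends by declaring the $f_i$ ``not associated to each other'' after the regrouping, and nothing in that regrouping prevents two leftover $g$'s from being associates (the uniqueness axiom of Definition \ref{def:ufsr} compares two factorizations of one element and cannot by itself exclude repeated associate factors within a single factorization). So this is a genuine gap in your write-up relative to the statement as printed, but it is equally a gap in the paper; moreover the clause is dispensable, since the lemma's only application, in the proof of Theorem \ref{thm:3.3} ii), uses just $x=a^ny$, hence $a^{n+1}y=0$, together with the fact that no $f_i$ is associated to $a$, which by uniqueness of factorization (Remark \ref{rmk:ufsrelem}) already yields $a^{n+1}\nmid y$. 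One caution on the part you proved beyond the paper: in the odd-repetition case, invoking normality to bring $f_i$ and $f_j$ adjacent does not work as stated, since normality only gives $f_if_k=f_kr$ with $r$ uncontrolled; the clean route is the parity automorphism $c\mapsto c\ev-c\od$, through which an odd homogeneous element commutes past arbitrary elements, after which your computation $f_iuf_i=0$ does force $y=0$.
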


\begin{proof}
Since $x \in R$ is a non-zero, non-unit element, it can be factored as $x=g_1\cdots g_s$ into normal irreducible elements. Assume that some $g_i$ ($i=1, 2, \ldots, s$) is associated to $a$. Then, there exist $u_i, v_i\in R^\times$ such that $g_i=u_iav_i$. Using that $a$ is even,  we obtain 

    \begin{align*}
         x&=g_1\cdots g_{i-1}g_i g_{i+1}\cdots g_s\\
         &=g_1\cdots g_{i-1}u_i a v_i g_{i+1}\cdots g_s\\
         &=ag_1\cdots g_{i-1}u_iv_i g_{i+1}\cdots g_s.
    \end{align*}
 \noindent By repeatedly using this logic to each $g_j$  associated to $a$, we obtain an expression $$x=a^n\cdots g_{i+1}u_iv_ig_{i+1}\cdots u_jv_j\cdots,$$ where the remaining $g$'s are not associated to $a$ and the $u$'s and $v$'s are units of $R$. Defining $f_k$ as products of units and a remaining $g$, we find that $$x=a^nf_1\cdots f_d,$$ where the $f_i$'s are normal irreducible elements not associated to $a$, and not associated to each other.  This completes the proof of the lemma. 
\end{proof}

Examples \ref{Ex:3.3}, \ref{Ex:3.4} and \ref{Example:3.6} somehow suggest that every unique factorization superring with no ``purely even'' zerodivisors is a superfield. i) of the following theorem gives a more precise statement of this assertion.

\begin{theorem}\label{thm:3.3}
Let $R$ be a non-trivial unique factorization  superdomain.

\begin{enumerate} 
\item[\rm i)] $R$ is a local ring with maximal ideal $\J_R$. In particular, $R$ is a superfield.

\item[\rm ii)] Every normal and irreducible element of $R$ is nilpotent.
\end{enumerate} 
\end{theorem}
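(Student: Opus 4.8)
The plan is to prove the two statements in the reverse order in which they are listed, since ii) provides the structural fact that drives i). Let me first establish ii), that every normal irreducible element of $R$ is nilpotent. By Remark \ref{rmk:ufsrelem} iv), any normal irreducible element $f$ is a zerodivisor, so there is a nonzero homogeneous $x$ with $fx=0$. The key is to split into cases according to the parity of $f$. If $f$ is odd, then supercommutativity forces $f^2=0$ immediately, so $f$ is nilpotent. If $f$ is even, I would apply Lemma \ref{Lemma:3.10} to the relation $fx=0$: this writes $x=f^n y$ where $y=f_1\cdots f_d$ is a product of normal irreducibles none of which is associated to $f$. Substituting back into $fx=0$ gives $f^{n+1}y=0$. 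The goal is then to transfer the vanishing onto a power of $f$ alone. Since $R$ is a superdomain, $\nil(R)=\J_R$ is prime (indeed completely prime by Remark \ref{Remark:2.3} iv)), and I expect to use primeness of $\J_R$ together with the fact that the $f_i$ are irreducible zerodivisors — hence lie in $\J_R$ by Remark \ref{rmk:ufsrelem} — to conclude that $f$ itself must lie in $\J_R=\nil(R)$, giving nilpotence.

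For i), once ii) is in hand the argument is largely formal. I would show that $R$ is local with maximal ideal $\J_R$ by proving that every non-unit lies in $\J_R$. Take a nonzero non-unit $x$; by Definition \ref{def:ufsr} i) it factors as $x=f_1\cdots f_d$ into normal irreducibles, and by ii) each $f_i$ is nilpotent, hence lies in $\nil(R)=\J_R$ (using the superdomain hypothesis). Since $\J_R$ is an ideal, $x\in\J_R$. Thus $R-R^{\times}\subseteq\J_R$; conversely $\J_R$ consists of non-units since it is a proper ideal, so $R-R^{\times}=\J_R$, which by Definition \ref{Definition:2.4} vii) says $R$ is local with maximal ideal $\J_R$. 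Because $\J_R$ is then the unique maximal ideal, $\overline{R}=R/\J_R$ is a field, so by Definition \ref{Definition:2.4} iv) the ring $R$ is a superfield.

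The main obstacle I anticipate is the even case of ii): passing from $f^{n+1}y=0$ to the nilpotence of $f$. The subtlety is that $y$ is a product of irreducible zerodivisors, so $f^{n+1}y=0$ does not by itself annihilate a power of $f$ on the nose. I would handle this by working in the completely prime ideal $\J_R$: each irreducible factor of $y$ is a zerodivisor and therefore is not regular, which via Remark \ref{Remark:2.3} v) and the superdomain condition places it inside $\J_R$; the complete primeness of $\J_R$ applied to the product relation (and the fact that $y$ is built from units times irreducibles not associated to $f$) then forces $f\in\J_R$. One must be careful that the factors $f_i$ in Lemma \ref{Lemma:3.10} are genuinely not regular — this is exactly where Remark \ref{rmk:ufsrelem} iv) is indispensable, since in a UFSR irreducibility already guarantees zerodivisor status, removing any worry that some $f_i$ could be a non-nilpotent regular element lying outside $\J_R$.
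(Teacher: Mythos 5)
Your part i) is correct and matches the paper's argument exactly, and your odd case of ii) is also the paper's. But the heart of the theorem --- the even case of ii) --- has a genuine gap, precisely at the step you yourself flagged as the ``main obstacle,'' and the route you propose does not close it. First, the claim that the zerodivisor factors $f_i$ of $y$ ``lie in $\J_R$ by Remark \ref{rmk:ufsrelem}'' is unsupported: that remark says normal irreducibles are zerodivisors, not that zerodivisors lie in $\J_R$. In a superdomain the latter implication fails in general --- e.g.\ in $\Bbbk[X\mid\theta]/(X\theta)$ the element $X$ is a zerodivisor lying outside $\J_R$ --- and proving it for UFSRs is essentially the content of the theorem itself, since an irreducible is nilpotent exactly when it lies in $\nil(R)=\J_R$; invoking it here is circular. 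Second, even granting $f_i\in\J_R$, complete primeness of $\J_R$ applied to $f^{n+1}f_1\cdots f_s=0\in\J_R$ only yields that \emph{some} factor lies in $\J_R$, and the $f_i$ already account for that, so nothing forces $f\in\J_R$. The paper's actual mechanism is different: from $a^{n+1}y=0$ it forms the identity $a^{2(n+1)}=a^{n+1}(a^{n+1}+y)$; if $a^{2(n+1)}\neq0$, comparing the unique factorizations of the two sides shows every irreducible factor of $a^{n+1}+y$ is associated to $a$, whence $a^{n+1}$ divides $y$, contradicting the conclusion of Lemma \ref{Lemma:3.10} that no factor of $y$ is associated to $a$. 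Hence $a^{2(n+1)}=0$. It is uniqueness of factorization, not primeness of $\J_R$, that eliminates $y$.

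A second omission: your case split ``odd versus even'' tacitly assumes the normal irreducible element is homogeneous, but irreducible elements of a UFSR need not be. The paper handles a non-homogeneous normal irreducible $a$ by first proving that $a\ev$ is irreducible --- if $a\ev=f_1\cdots f_d$, then multiplying the resulting relation by a factorization $a\od=g_1\cdots g_s$ gives $g_1\cdots g_s\,a=g_1\cdots g_s f_1\cdots f_d$, and counting factors via uniqueness forces $d=1$ --- and then observing that $a\ev^{\,n}=0$ implies $a^{n+1}=0$ because $a\od$ squares to zero. Without this reduction, settling the even case does not settle the theorem.
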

 
\begin{proof}  \textit{ }

\begin{itemize} 
    \item[i)] Assume condition ii) holds. Suppose, for a contradiction, that $x\in R-\J_R$ is a non-unit. Then, by unique factorization,  there exist finitely many irreducible elements $f_1, \ldots, f_n$ in $R$ uniquely determined such that $x=f_1\cdots f_n$. Since each element $f_i$ is normal and irreducible, it must be nilpotent by ii), for every $i=1,\ldots, n$. Now, because $R$ is a superdomain, $\nil(R)=\J_R$, so $f_i\in\J_R$ for all $i$, \textit{a fortiori}, $x\in\J_R$, contradicting our initial assumption about $x$. Therefore, all elements in $R-\J_R$ must be units, implying that $\J_R$ is the unique maximal ideal of $R$ and making  $R$ a superfield, as desired.

    \item[ii)] Let $a\in R$ be a normal irreducible element.  If $a$ is odd, then $a^2=0$ (making it a nilpotent). So, let us assume that $a$ is not homogeneous. We claim that  $a\ev$ is irreducible. Assume  $a\ev$ factors as 
    
    $$a\ev=f_1\cdots f_d.$$ 
    
    \noindent This implies,  $$a=f_1\cdots f_d+a\od\quad\text{and}\quad a\od a=a\od f_1\cdots f_d.$$ Furthermore, let the factorization of $a\od$ be $$a\od=g_1\cdots g_s.$$ We obtain  $$g_1\cdots g_s a=g_1\cdots g_s f_1\cdots f_d.$$ Unique factorization then dictates that $s+1=s+d$, leading to $d=1$ and $a\ev=f_1$. Therefore, if $a$ is irreducible, $a\ev$ is also irreducible. In this context, suppose that for some non-negative integer $n$, $a\ev^n=0$. Consequently, $a^{n+1}=0$. Thus, the proof of the theorem is complete upon demonstrating that any even irreducible element is nilpotent.   
        
    Let $a\in R$ denotes an even irreducible element. Then, $a$ is both  prime and zerodivisor (Remark \ref{rmk:ufsrelem} ii) and iv)). Therefore, there exists a nonzero homogeneous element $x\in R$ such that $ax=0$. By Lemma \ref{Lemma:3.10}, we can express $x$ as $x=a^ny$, with $n\geq0$ and $y=f_1\cdots f_s$. Here, the $f_i$ ($i=1, 2, \ldots, s$) are not associated normal irreducible elements, and none of them is associated to $a$. Note that $a^{n+1}y=0$. We claim that $$a^{2(n+1)}=0.$$ To arrive at a contradiction, let's assume $a^{2(n+1)}\neq0$. Then $$a^{2(n+1)}=a^{n+1}(a^{n+1}+y)$$ implies that $a^{n+1}$ divides $y$, a contradiction. We can therefore conclude that $$a^{2(n+1)}=0 \, ,$$ establishing that $a$ is nilpotent, as desired.  
\end{itemize}
\end{proof}

\begin{example}\label{Example:3.12} This example shows that the property of being a superfield is not sufficient to guarantee the existence of unique factorization. In other words, a superfield may not be a UFSR. Consider   $R=\Bbbk[\theta_1, \theta_2, \theta_3]$, where $\Bbbk$ is a field and $\theta_1,\theta_2, \theta_3$ are  odd variables. We then define the quotient ring

\[S=\Bbbk[\varepsilon_1, \varepsilon_2, \varepsilon_3]:=R/(\theta_1\theta_2-\theta_1\theta_3).\]
       Since $\varepsilon_1\varepsilon_2=\varepsilon_1\varepsilon_3$, we obtain $\varepsilon_1\varepsilon_2\varepsilon_3=0$. The even and odd parts of $S$ are given by
    
    \[S\ev=\Bbbk+ \Bbbk \varepsilon_1\varepsilon_2+\Bbbk \varepsilon_2\varepsilon_3\quad\text{and}\quad S\od=\Bbbk\varepsilon_1+\Bbbk \varepsilon_2+\Bbbk \varepsilon_3.
    \]

Additionally, the canonical ideal is $\J_S=\Bbbk\varepsilon_1+\Bbbk \varepsilon_2+\Bbbk \varepsilon_3 + \Bbbk \varepsilon_1\varepsilon_2+\Bbbk \varepsilon_2\varepsilon_3$. Therefore, $S$ is a superfield, because $S/\J_S \simeq \Bbbk$.
\medskip

An element $\alpha \in S$ 
    
    \[\alpha=\alpha_0+\alpha_1\varepsilon_1+\alpha_2\varepsilon_2+\alpha_3\varepsilon_3+\alpha_{12}\varepsilon_1\varepsilon_2+\alpha_{23}\varepsilon_2\varepsilon_3,\]
    
    \noindent where $\alpha_0, \alpha_1, \alpha_2, \alpha_3, \alpha_{12}, \alpha_{23} \in \Bbbk$, is invertible if and only if $\alpha_0\neq0$. In such a case, the inverse of $\alpha$, denoted by $\alpha^{-1}$, can be determined as
    \[
    \alpha^{-1}=\gamma_0+\gamma_1\varepsilon_1+\gamma_2\varepsilon_2+\gamma_3\varepsilon_3+\gamma_{12}\varepsilon_1\varepsilon_2+\gamma_{23}\varepsilon_2\varepsilon_3,\]
    
    \noindent where 
    $$
    \gamma_0:=\alpha_0^{-1};\,\,\gamma_1:= - \alpha_0^{-2} \alpha_1;\,\, \gamma_2:=-\alpha_0^{-2} \alpha_2;\,\, \gamma_3:=-  \alpha_0^{-2}\alpha_3;
    $$
    
    $$
    \gamma_{12}:=2 \alpha_0^{-3} \alpha_1 \alpha_2- \alpha_0^{-2} \alpha_{12};\,\, \gamma_{23}:=2 \alpha_0^{-3} \alpha_2 \alpha_3- \alpha_0^{-2} \alpha_{23}.
    $$
\vspace{0.1cm}  

\noindent Moreover, the non-units of $S$ are precisely the elements $\alpha$ for which $\alpha_0=0$. While $\varepsilon_1, \varepsilon_2, \varepsilon_3$ constitute irreducible elements, the equation $\varepsilon_1\varepsilon_2=\varepsilon_1\varepsilon_3$ exposes the failure of unique factorization within $S$. 
\end{example}
 
Recall that a superring $R$ is said to be \textit{Noetherian} if and only if the even part $R\ev$ is Noetherian and the odd part $R\od$ is a finitely generated $R\ev$-module (\cite[Lemma 1.4]{MASUOKA2020106245}). We then introduce the term \textit{oddly-Noetherian superring} to describe a superring where $R\od$ is a finitely generated $R\ev$-module, regardless of the Noetherian property of $R\ev$. 

\begin{theorem}\label{Theorem:3.7}
\it Let $R$ be a non-trivial oddly-Noetherian unique factorization superdomain. Then, $R$ is local with unique maximal ideal $\J_R$, which is a nilpotent ideal.  
\end{theorem}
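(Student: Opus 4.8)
The plan is to separate the two assertions. The claim that $R$ is local with unique maximal ideal $\J_R$ is not new: it is exactly the content of Theorem \ref{thm:3.3} i), whose hypotheses (a non-trivial unique factorization superdomain) are satisfied here, so I would simply invoke it. All the real work goes into proving that $\J_R$ is nilpotent, and for this the decisive hypothesis is \emph{oddly-Noetherian}: it provides finitely many odd elements $\omega_1,\ldots,\omega_m$ generating $R\od$ as an $R\ev$-module. I would first record that $\J_R=R\cdot R\od$ coincides with the two-sided ideal generated by $\omega_1,\ldots,\omega_m$, since $R\od=\sum_i R\ev\omega_i$ and hence every element of $\J_R=R\od^2\oplus R\od$ is an $R$-combination of the $\omega_i$ and of their pairwise products.

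The key step is a vanishing lemma: any product of more than $m$ odd elements of $R$ is zero. To see this, write each odd factor as $\eta_j=\sum_{i=1}^m a_{ji}\omega_i$ with $a_{ji}\in R\ev$; since even elements are central (an immediate consequence of supercommutativity \eqref{Supercommutativity}), expanding the product $\eta_1\cdots\eta_k$ pulls all coefficients to the front and leaves an $R\ev$-combination of monomials $\omega_{i_1}\cdots\omega_{i_k}$ in the generators. When $k>m$, the pigeonhole principle forces some index to repeat; anticommutativity of odd elements ($\omega_i\omega_j=-\omega_j\omega_i$) lets me move the two equal factors next to each other, and then $\omega_i^2=0$ (every odd element squares to zero) kills the monomial. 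Hence every such monomial, and therefore the whole product, vanishes.

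Finally I would assemble the nilpotency. Each element of $\J_R=R\od^2\oplus R\od$ is a sum of products of odd elements in which at least one odd factor occurs, so a product of $m+1$ elements of $\J_R$ expands, after extracting the central even coefficients, into a sum of monomials each involving at least $m+1$ odd generators. By the vanishing lemma every such monomial is $0$, whence $\J_R^{m+1}=0$ and $\J_R$ is nilpotent. I expect the only delicate point to be the noncommutative bookkeeping — keeping track of the anticommutation signs and justifying that the even coefficients may be pulled out freely — rather than any genuine conceptual obstacle; in particular it is worth noting that this half of the statement uses neither unique factorization nor the superdomain property, which enter only through the appeal to Theorem \ref{thm:3.3} i) for the locality assertion.
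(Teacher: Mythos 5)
Your proposal is correct and follows essentially the same route as the paper: locality is quoted from Theorem \ref{thm:3.3} i), and nilpotency of $\J_R$ is obtained from a finite $R\ev$-generating set $z_1,\ldots,z_n$ of $R\od$ by expanding a product of $n+1$ elements of $\J_R$ and killing every monomial via pigeonhole, anticommutativity of odd elements, and the fact that odd elements square to zero. If anything, you are slightly more careful than the paper's terse expansion, which writes each $x_i\in\J_R$ as an $R\ev$-combination of the $z_j$ (literally valid only for odd $x_i$), whereas you explicitly account for the even component $R\od^2$ of $\J_R$ by noting that each factor still contributes at least one odd generator.
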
 

\begin{proof}
By Theorem \ref{thm:3.3}, we note that $R$ is local with maximal ideal $\J_R$. Since $R$ is oddly-Noetherian, we can choose a minimal generating set $\{z_1, \ldots, z_n\}$ of the $R\ev$-module $R\od$. We will prove that  $\J_R$ is nilpotent. Fix the integer $m=n+1$. Now, consider any $x_1, \cdots, x_m\in\J_R$. For each $i\in\{1, \ldots, m\}$, there exist elements $a_{i1}, \ldots, a_{in}\in R\ev$ such that $$x_i=a_{i1}z_1+\cdots+a_{in}z_n.$$ Then, we obtain, for $x_1\cdots x_m\in\J_R^m$,

\vspace{0.3cm}

\begin{align}
    x_1\cdots x_m&=\prod_{1\leq i\leq m}\sum_{1\leq j\leq n}a_{ij}z_j\nonumber\\
    &=\sum_{1\leq j_1, \ldots,\, j_m\leq n}a_{1j_1}\cdots a_{ mj_m}z_{j_1}\cdots z_{j_m}\\
    &=0.\nonumber
\end{align}

\vspace{0.3cm}
 
\noindent (The product vanishes because $m>n$.)  
\end{proof}

\begin{corollary}\label{Corollary:3.7}
    Let $R$ be a non-trivial Noetherian unique factorization superdomain. Then, $R$ is local with maximal ideal $\J_R$, which is a nilpotent ideal.  
\end{corollary}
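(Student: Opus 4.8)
The plan is to deduce this statement immediately from Theorem \ref{Theorem:3.7}, observing that the only difference between the two results lies in the hypothesis imposed on $R$: the theorem assumes $R$ is \emph{oddly-Noetherian}, whereas the corollary assumes the stronger condition that $R$ is \emph{Noetherian}. Recalling the characterization stated just before Theorem \ref{Theorem:3.7}, a superring $R$ is Noetherian precisely when $R\ev$ is a Noetherian ring \emph{and} $R\od$ is a finitely generated $R\ev$-module. The second of these two conditions is exactly the defining property of an oddly-Noetherian superring.

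So the first (and essentially only) step I would take is to note that every Noetherian superring is oddly-Noetherian: if $R$ is Noetherian then, in particular, $R\od$ is a finitely generated $R\ev$-module, which is precisely what ``oddly-Noetherian'' requires, making no use of the Noetherian property of $R\ev$. Consequently, a non-trivial Noetherian unique factorization superdomain $R$ satisfies all the hypotheses of Theorem \ref{Theorem:3.7}.

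Applying Theorem \ref{Theorem:3.7} then yields directly that $R$ is local with unique maximal ideal $\J_R$, and that $\J_R$ is nilpotent, which is exactly the assertion of the corollary. I do not expect any genuine obstacle here: all the substantive work --- establishing locality via Theorem \ref{thm:3.3} and then exploiting a minimal generating set of the $R\ev$-module $R\od$ to force $\J_R^{\,n+1}=0$ --- is already carried out in the proof of Theorem \ref{Theorem:3.7}. Since the Noetherian hypothesis is strictly stronger than the oddly-Noetherian one, the corollary is a specialization of the theorem, and the sole content reduces to the implication ``Noetherian $\Rightarrow$ oddly-Noetherian,'' which is immediate from the definitions.
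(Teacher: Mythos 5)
Your proposal is correct and matches the paper's own proof exactly: both reduce the corollary to Theorem \ref{Theorem:3.7} via the immediate observation that a Noetherian superring (where $R\od$ is a finitely generated $R\ev$-module by definition) is in particular oddly-Noetherian. Nothing further is needed.
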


\begin{proof}
    Since any Noetherian superring is oddly-Noetherian by definition, the result follows from Theorem \ref{Theorem:3.7}.
\end{proof}
 
\subsection{Artinian superrings}\textit{ }
\medskip

This subsection delves into the relationship between UFSRs and Artinian superrings, elucidating their connections.  Recall that we are assuming that $R\od\neq0$.  

\begin{definition}  
A superring  $R$ is \textit{Artinian} if it satisfies the descending chain condition on superideals.
\end{definition}

We denote the usual Krull dimension in the commutative context by $\kdim(-)$.  
\medskip

The subsequent  definition follows  Masuoka and Zubkov (\cite[\S 4]{MASUOKA2020106245}).

\begin{definition}
   Let $R$ be a Noetherian superring such that $\kdim(R\ev)<\infty$ and let $a_1, \ldots, a_s\in R\od$. For any $I\subseteq\{1, \ldots, s\}$, let $a^I:=a_{i_1}\cdots a_{i_{n}}$, where $I=\{i_1, \ldots, i_{n}\}$ and the product is taken in $R$. The annihilator of $a^I$ in $R\ev$,  $$\ann_{R\ev}(a^I):=\{r\in R\ev\mid ra^I=0\},$$ forms an ideal of $R\ev$.  

\vspace{0.2cm}

\begin{itemize}
    \item[i)] If  $\kdim(R\ev)=\kdim(R\ev/\ann_{R\ev}(a^I))$, we say that $a_{i_1}, \ldots, a_{i_n}$ form a \emph{system of odd parameters of length $n$}.

    \vspace{0.2cm}

    \item[ii)] The greatest integer $m$ such that there exists a set of odd parameters of length $m$ is called the \emph{odd Krull superdimension} of $R$ and is  denoted by $m:=\ksdim\od(R)$. 

    \vspace{0.2cm}

    \item[iii)] The \emph{even Krull superdimension} of $R$ is the usual Krull dimension of $R\ev$ and is denoted $\ksdim\ev(R)$.

    \vspace{0.2cm}

    \item[iv)] The \textit{Krull superdimension} of $R$ is the pair $\ksdim\ev(R)\mid\ksdim\od(R)$. 
\end{itemize}
\end{definition}

\newpage

\begin{proposition}\label{Pro:3.6.UFSR}
    Let $R$ be a superring. The following conditions are equivalent.

\vspace{0.2cm}

    \begin{itemize}
        \item[\rm i)] $R$ is Artinian.

\vspace{0.2cm}

        \item[\rm ii)] \begin{itemize}
            \item[\rm a)] $R\ev$ is an Artinian ring. 
            
\vspace{0.2cm}

            \item[\rm b)] The $R\ev$-module $R\od$ is Artinian. 
        \end{itemize} 
        
\vspace{0.2cm}

        \item[\rm iii)] \begin{itemize}
            \item[\rm a)] $R$ is Noetherian. 
            
\vspace{0.2cm}

            \item[\rm b)] $R$ has Krull superdimension $0\mid d$, where $d$ is a non-negative  integer.   
        \end{itemize}   
    \end{itemize}
\end{proposition}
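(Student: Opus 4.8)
The plan is to establish the two equivalences i) $\Leftrightarrow$ ii) and ii) $\Leftrightarrow$ iii), using ii) as a hub: the first is purely superring-theoretic, while the second only feeds in standard commutative-algebra input applied to $R\ev$ and to the module $R\od$. The whole argument rests on the fact recorded in Definition \ref{Definition:2.2} i) and Remark \ref{Remark:2.3} that every superideal $\a$ splits canonically as $\a=\a\ev\oplus\a\od$, where $\a\ev=\a\cap R\ev$ is an ideal of $R\ev$ and $\a\od=\a\cap R\od$ is an $R\ev$-submodule of $R\od$; thus a descending chain of superideals amounts to a pair of descending chains, one of ideals of $R\ev$ and one of $R\ev$-submodules of $R\od$.

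For ii) $\Rightarrow$ i), I would take a descending chain of superideals $\a^{(1)}\supseteq\a^{(2)}\supseteq\cdots$ and pass to the two component chains $\a^{(k)}\ev$ and $\a^{(k)}\od$; by ii a) and ii b) both stabilize, and since $\a^{(k)}=\a^{(k)}\ev\oplus\a^{(k)}\od$ the original chain stabilizes. The converse i) $\Rightarrow$ ii) is the part that needs a genuine idea, because a descending chain living in a single graded component is generally not a chain of superideals. The remedy is to append the minimal companion in the complementary degree forced by supercommutativity. Concretely, given a descending chain of ideals $\mathfrak b_1\supseteq\mathfrak b_2\supseteq\cdots$ of $R\ev$, I would form $\a^{(k)}:=\mathfrak b_k\oplus\mathfrak b_k R\od$; using $R\od R\od\subseteq R\ev$ one checks that this is a (two-sided) superideal whose even part is exactly $\mathfrak b_k$, so DCC on superideals forces the $\mathfrak b_k$ to stabilize and yields ii a). Dually, given a descending chain of $R\ev$-submodules $M_1\supseteq M_2\supseteq\cdots$ of $R\od$, I would form $\a^{(k)}:=R\od M_k\oplus M_k$; again $R\od R\od\subseteq R\ev$ shows this is a superideal, now with odd part exactly $M_k$, so stabilization forces the $M_k$ to stabilize and yields ii b).

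For ii) $\Leftrightarrow$ iii) I would invoke two classical facts about commutative rings. First, a commutative ring is Artinian if and only if it is Noetherian of Krull dimension $0$; applied to $R\ev$ this links ii a) with the pair ``$R\ev$ Noetherian and $\kdim(R\ev)=0$''. Second, over a commutative Artinian ring every Artinian module has finite length, hence is finitely generated, and conversely every finitely generated module over such a ring has finite length and is therefore Artinian. Assuming ii), $R\ev$ is then Noetherian with $\kdim(R\ev)=0$, and $R\od$ is finitely generated over $R\ev$; by the characterization of Noetherian superrings (\cite[Lemma 1.4]{MASUOKA2020106245}) this gives iii a), while $\kdim(R\ev)=0$ together with the finiteness of the odd Krull superdimension of a Noetherian superring (it is bounded by the number of generators of $R\od$) gives iii b), i.e. Krull superdimension $0\mid d$. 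Conversely, assuming iii), $R\ev$ is Noetherian with $\kdim(R\ev)=0$ hence Artinian, and $R\od$ is finitely generated over the Artinian ring $R\ev$ hence of finite length, hence Artinian; this is exactly ii).

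The routine verifications are the ideal-closure checks for the two families $\mathfrak b_k\oplus\mathfrak b_k R\od$ and $R\od M_k\oplus M_k$, both of which reduce to the single structural relation $R\od R\od\subseteq R\ev$ together with the $R\ev$-module laws. I expect the main conceptual obstacle to be exactly the construction in i) $\Rightarrow$ ii): recognizing that the honest way to detect a chain in one component by superideals is to enlarge it minimally in the other component (the odd ``tail'' $\mathfrak b_kR\od$, respectively the even ``head'' $R\od M_k$), chosen so that the enlargement is forced and therefore does not destroy the injectivity of the assignment on the component one is tracking.
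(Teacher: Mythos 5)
Your proof is correct, but it is genuinely different from what the paper does: the paper gives no argument at all and simply cites \cite[\S 1]{ZubkovKolesnikov}, where this equivalence is established in the general dimension theory of superrings. Your route makes the result self-contained. The key point, the construction in i) $\Rightarrow$ ii), is sound: your two families are exactly the superideals generated by the graded pieces, $\b_k\oplus\b_k R\od=\b_k R$ and $R\od M_k\oplus M_k=RM_k$, and the closure checks reduce, as you say, to $R\od R\od\subseteq R\ev$ and supercommutativity; since the even (resp.\ odd) component of the generated superideal is exactly $\b_k$ (resp.\ $M_k$), stabilization of the superideal chain forces stabilization of the tracked component. For ii) $\Leftrightarrow$ iii) you correctly combine the classical facts (Artinian $=$ Noetherian of dimension $0$ for $R\ev$; over an Artinian ring, Artinian $=$ finitely generated $=$ finite length for modules) with the Masuoka--Zubkov characterization of Noetherian superrings. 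Two small points are worth making explicit. First, the finiteness of $\ksdim\od(R)$ in ii) $\Rightarrow$ iii b) deserves a sentence: if $R\od$ is generated by $m$ elements over $R\ev$, then any product of more than $m$ odd elements vanishes (expand, reorder by anticommutativity, and use that odd elements square to zero), which bounds any system of odd parameters by $m$; this is the same mechanism the paper uses in the proof of Theorem \ref{Theorem:3.7}. Second, note that $\kdim(R\ev)=0$ makes the odd-parameter condition $\kdim(R\ev/\ann_{R\ev}(a^I))=\kdim(R\ev)$ collapse to $a^I\neq0$, and the empty system gives $d\geq0$, so $d$ is a well-defined non-negative integer. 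What each approach buys: the paper's citation is shorter and places the proposition in its proper context (Zubkov--Kolesnikov prove more, e.g.\ the module-theoretic versions), while your argument is elementary, verifiable within the paper's own toolkit, and isolates the one nontrivial idea --- detecting a one-component chain by its minimal superideal closure.
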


\begin{proof}
   See \cite[\S 1]{ZubkovKolesnikov}.
\end{proof}
 
\begin{remark}\label{REMARK:3.11:UFSR}
    Let $R$ be an Artinian superring. Then, its even part $R\ev$ is an Artinian ring (Proposition \ref{Pro:3.6.UFSR} ii) a)) and any prime ideal $\p\ev$ of $R\ev$ is maximal (\cite[Proposition 8.1]{atiyah}). Consequently, any prime ideal of $R$ is maximal ($\p=\p\ev\oplus R\od$). In particular, if $(R,\m)$ is local, then $\m$ must be the unique prime ideal of $R$. Conversely, if the only prime ideal of a Noetherian local superring $R$ is the maximal one, then, Proposition \ref{Pro:3.6.UFSR} iii) guarantees that the superring $R$ is Artinian.
\end{remark}

The following proposition is the superization of \cite[Proposition 8.6]{atiyah}.

\begin{proposition}\label{Proposition;3.8}
    Let $(R, \m)$ be a local Noetherian superring. Then, exactly one of the following conditions holds.

\vspace{0.2cm}

    \begin{itemize}
        \item[\rm i)] $\m^n\neq\m^{n+1}$ for all $n$.

\vspace{0.2cm}

        \item[\rm ii)] $\m^n=0$ for some $n$, in which case $R$ is Artinian. 
    \end{itemize}
\end{proposition}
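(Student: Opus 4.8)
The plan is to prove that the two alternatives are mutually exclusive and that at least one of them always occurs, with all the real content sitting in the implication ``(i) fails $\Rightarrow$ (ii) holds.'' First I would dispatch exclusivity. If $\m^{n}=0$ for some $n$, then $\m^{m}=\m^{m+1}=0$ for every $m\geq n$, so the strict inequalities demanded by (i) cannot hold for all $n$; conversely, if $\m^{n}\neq\m^{n+1}$ for all $n$ then in particular $\m^{n}\neq 0$ for all $n$ (since $\m^{n}=0$ would force $\m^{n}=\m^{n+1}$), so (ii) fails. Hence at most one of (i), (ii) can hold.

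Next I assume (i) fails, so there is an index $n$ with $\m^{n}=\m^{n+1}$, and I rewrite this as $\m\cdot\m^{n}=\m^{n}$ in order to apply Nakayama's lemma to the $R$-module $\m^{n}$. Two ingredients make this legitimate. Because $R$ is Noetherian, $R$ is a Noetherian module over the Noetherian ring $R\ev$ (as $R=R\ev\oplus R\od$ with $R\od$ a finitely generated $R\ev$-module), so the superideal $\m^{n}$ is finitely generated over $R\ev$, hence over $R$. Because $(R,\m)$ is local, $\m$ coincides with the Jacobson radical $\mathfrak{r}_{R}$. Granting Nakayama, the relation $\m\cdot\m^{n}=\m^{n}$ forces $\m^{n}=0$, which is precisely the first half of (ii).

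It then remains to deduce that $R$ is Artinian. From $\m^{n}=0$ the maximal ideal $\m$ is nilpotent, so $\m\subseteq\nil(R)$; since $\nil(R)$ is contained in every prime ideal, in particular in $\m$, we get $\nil(R)=\m$, and therefore $\m$ is the unique prime ideal of $R$. By Remark \ref{REMARK:3.11:UFSR}, a Noetherian local superring whose only prime ideal is the maximal one is Artinian; equivalently $\kdim(R\ev)=0$, so the Krull superdimension is $0\mid d$ and Proposition \ref{Pro:3.6.UFSR} iii)$\Rightarrow$i) applies. This completes (ii), and together with exclusivity gives the dichotomy.

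The step that requires genuine care, and which I regard as the main obstacle, is Nakayama's lemma in the super (noncommutative) setting, since the usual Cayley--Hamilton/determinant trick presumes commutativity. I would instead invoke the Jacobson-radical form valid for arbitrary rings: choose a generating set $m_{1},\dots,m_{k}$ of $\m^{n}$ of minimal cardinality, and from $m_{k}\in\m\cdot\m^{n}$ write $m_{k}=\sum_{i} x_{i}m_{i}$ with $x_{i}\in\m$, so that $(1-x_{k})m_{k}=\sum_{i<k} x_{i}m_{i}$. The point is that $1-x_{k}$ is a unit: in a local superring $\m=R-R^{\times}$, and if $1-x_{k}$ were a non-unit then $1=(1-x_{k})+x_{k}\in\m$, a contradiction. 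Multiplying by $(1-x_{k})^{-1}$ expresses $m_{k}$ in terms of $m_{1},\dots,m_{k-1}$, contradicting minimality unless $\m^{n}=0$. Verifying this unit property is exactly where the local and super hypotheses are used, and it is what lets $\m=\mathfrak{r}_{R}$ play the role of the Jacobson radical throughout.
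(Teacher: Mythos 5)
Your proof is correct and follows essentially the same route as the paper's: Nakayama's lemma applied to $\m^n=\m\cdot\m^n$ yields $\m^n=0$, then nilpotence of $\m$ forces $\m$ to be the unique prime ideal (you via $\m\subseteq\nil(R)$, the paper via taking radicals --- the same content), and Artinianness follows from Remark \ref{REMARK:3.11:UFSR}. The only difference is that where the paper simply cites the ``super'' Nakayama lemma of Bartocci--Bruzzo--Hern\'andez, you reprove it from scratch with the minimal-generating-set and $1-x_k\in R^\times$ trick, correctly checking the points where the super and local hypotheses enter (finite generation of $\m^n$ over $R\ev$, $\m=R-R^\times$); this is a valid, self-contained substitute rather than a genuinely different approach.
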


\begin{proof} The conditions i) and ii) are mutually exclusive. 
Assume that i) does not hold, meaning $\m^n=\m^{n}\m$, for all $n$. Then, by the ``super'' Nakayama's Lemma (\cite[Proposition 1.1]{bartocci2012geometry}), we get that $\m^n=0$. Now, consider any prime ideal $\p$ of $R$. Then, clearly $0=\m^n\subseteq\p$. Taking radicals and using the fact that every prime ideal is radical (\cite[Lemma 4.1.13]{westrathesis}), we find that $\m\subseteq\p$, and consequently $\m=\p$ (because $\m$ is maximal). In other words, $\m$ is the only prime ideal of $R$, and $R$ is Artinian (Remark \ref{REMARK:3.11:UFSR}).  
\end{proof}

\begin{theorem}\label{thm:sartin}
Every Noetherian unique factorization superdomain $R$ is local and Artinian. Furthermore, 

\[
\ksdim(R)=0\mid d, \quad \text{where}\quad d\geq1.
\]

\end{theorem}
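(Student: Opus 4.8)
The plan is to assemble the machinery already in place: the statement is essentially a chain of the earlier results, with a single genuinely new verification needed for the strict inequality $d\geq 1$.

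First I would invoke Corollary \ref{Corollary:3.7} to record that $R$ is local with maximal ideal $\m=\J_R$ and that $\J_R$ is a nilpotent ideal, say $\J_R^{\,n}=0$ for some $n$. Since $R$ is local and Noetherian, Proposition \ref{Proposition;3.8} applies: as $\m^{n}=\J_R^{\,n}=0$, condition i) of that proposition fails (both $\m^{n}$ and $\m^{n+1}$ vanish for large $n$), so condition ii) must hold, and it asserts precisely that $R$ is Artinian. This settles the ``local and Artinian'' claim.

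Next, to pin down the shape of the Krull superdimension, I would feed ``$R$ is Artinian'' into the equivalence i)$\Leftrightarrow$iii) of Proposition \ref{Pro:3.6.UFSR}. Since $R$ is also Noetherian, condition iii) yields $\ksdim(R)=0\mid d$ for some non-negative integer $d$. In particular, by ii)a) of the same proposition, $R\ev$ is an Artinian ring, so $\kdim(R\ev)=0$.

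The only remaining point, and the one requiring an actual argument rather than a citation, is the strict inequality $d\geq 1$; this is where the standing hypothesis $R\od\neq 0$ enters. I would pick a nonzero odd element $a\in R\od$ and test the length-one set $\{a\}$ against the definition of a system of odd parameters. Its annihilator $\ann_{R\ev}(a)$ is a proper ideal of $R\ev$, because $1\cdot a=a\neq 0$ forces $1\notin\ann_{R\ev}(a)$; hence $R\ev/\ann_{R\ev}(a)$ is a nonzero Artinian ring and therefore has Krull dimension $0=\kdim(R\ev)$. The defining equality $\kdim(R\ev)=\kdim\big(R\ev/\ann_{R\ev}(a)\big)$ then holds, so $a$ is a system of odd parameters of length $1$, giving $d=\ksdim\od(R)\geq 1$. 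The main (and essentially only) obstacle is making sure this last computation genuinely matches the definition: everything hinges on $R\ev$ being zero-dimensional, so that any \emph{nonzero} quotient is again zero-dimensional, together with the elementary fact that a nonzero odd element has proper annihilator in $R\ev$.
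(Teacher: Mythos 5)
Your proposal is correct and follows exactly the paper's route: Corollary \ref{Corollary:3.7} for locality and nilpotency of $\J_R$, Proposition \ref{Proposition;3.8} ii) for Artinianity, and Proposition \ref{Pro:3.6.UFSR} iii) for $\ksdim(R)=0\mid d$. The only difference is that where the paper simply asserts $d\geq 1$ from $R\od\neq 0$, you supply the verification (a nonzero odd $a$ has proper annihilator in the zero-dimensional ring $R\ev$, so $\{a\}$ is a system of odd parameters of length $1$), which is a correct and welcome filling-in of the paper's terse final step.
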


\begin{proof}
    By Corollary \ref{Corollary:3.7}, $R$ is local and its maximal ideal is nilpotent. Therefore,  Proposition \ref{Proposition;3.8} ii) guarantees that $R$ is Artinian. Now, using  Proposition \ref{Pro:3.6.UFSR} iii), we conclude that $\ksdim\ev(R)=0$ and $\ksdim\od(R)=d\geq0$. However, since we assume that $R\od\neq0$, then $d\geq1$. 
\end{proof}

\section{ Final comments}\label{comments}

We now examine the broader implications of our findings. 

\subsection{Dedekind superrings}\label{DEDEKIND} 
\textit{ }
\medskip

In commutative algebra a well-known result establishes that a Dedekind domain is a unique factorization domain if and only it is a principal ideal domain. However, this type of result does not hold for superrings, a Dedekind superring cannot be a UFSR. Firstly, unlike UFSRs which are inherently local, Dedekind superrings as defined in \cite{JT1} are not necessarily local. Secondly, even when restricting to local Dedekind superrings, their even Krull superdimension differs from UFSRs. Specifically, local Dedekind superrings have an even Krull superdimension equal to one, whereas UFSRs always have even Krull superdimension equal to zero.  

\subsection{Auslander–Buchsbaum theorem}\label{AUSLANDER}
\textit{ }
\medskip

Let $R$ be a local Noetherian superring with maximal ideal $\m$ and residue field $\Bbbk=R/\m$. It can be proved that $\kdim(R\ev)$ is finite (\cite[Corollary 11.11]{atiyah}) and any set of odd parameters has length at most $\dim_{\Bbbk}((\m/\m^2)\od)$ (\cite[Lemma 5.1]{MASUOKA2020106245}). Consequently, we have $$\ksdim\ev(R)\leq\dim_\Bbbk((\m/\m^2)\ev)\quad\text{and}\quad\ksdim\od(R)\leq\dim_\Bbbk((\m/\m^2)\od).$$ Masuoka and Zubkov (\cite[\S 5]{MASUOKA2020106245}) define a regular superring $(R, \m)$ as one with $$
\ksdim(R)=\sdim_\Bbbk(\m/\m^2),$$ where $$\sdim_\Bbbk(\m/\m^2):=\dim_\Bbbk((\m/\m^2)\ev)\mid\dim_\Bbbk((\m/\m^2)\od).$$
\medskip

In commutative algebra, the Auslander–Buchsbaum Theorem  guarantees that every regular local ring is a unique factorization domain (\cite[Theorem 5]{auslander1959unique}). Given this known result, it is natural to inquire whether an analogous relationship exist in the superrings context.

\begin{question}\label{Q:1}\rm 
   Is every regular local superring a UFSR?
\end{question}

No, it is not. We can construct a counterexample by considering a a regular local superdomain with Krull superdimension $n\mid m$ such that $n > 0$. If $R$ were a UFSR, it would also be Artinian (Theorem \ref{thm:sartin}), with even Krull superdimension $n=0$, a contradiction.
\medskip

Now consider the superring $R=\mathbb{C}[\theta_1, \theta_2]$, where $\theta_1, \theta_2$ are odd variables. Observe that $\theta_1\theta_2=\theta_1(\theta_1+\theta_2)$.  It is not hard to see that $\theta_i$  ($i=1, 2$) is not associated to $\theta_1+\theta_2$, and hence unique factorization fails (even though  $R$ is local and regular with superdimension $0\mid 2$). Therefore, the Auslander–Buchsbaum theorem does not have an analogue in the context of superrings. 

\subsection{Weaker definitions of UFSRs}\label{S:4.3}
\textit{ }
\medskip

Our result might suggest that the definition of UFSRs employed in this work is overly restrictive even though it aligns well with the classical notion. This raises the question: could weaker conditions lead to richer properties and a broader class of superrings satisfying them? We explore two such alternative definitions:

\begin{itemize}
    \item \textbf{Homogeneous Element Factorization: } Define a superring as a \textit{homogeneous-UFSR} if it exhibits unique factorization for its homogeneous elements; that is, if every non-zero, non-unit homogeneous  element can be expressed uniquely as a product of homogeneous normal irreducible  elements.  
    
    \item \textbf{Even Element Factorization:} Define a superring $R$ as an \textit{even-UFSR} if its even part $R\ev$ is a unique factorization ring  with zerodivisors as defined by Galovich \cite{galovich1978unique} (see Example \ref{Example:3.6}). 
\end{itemize}

Following the same reasoning as in the proof of Theorem \ref{thm:3.3}, it can be shown that all irreducible elements are nilpotent and both homogeneous-USFR and even-UFSR are  necessarily superfields. Thus, the proposed weaker definitions ultimately lead to equivalent restrictions on superrings compared to our initial unique factorization definition.

\subsection{Classification of superfields}\label{}
\textit{ }
\medskip

Theorem \ref{thm:3.3} asserts that any unique factorization superring $R$ is inherently a superfield, implying $R$ is canonically isomorphic to its total superring of fractions $K(R)$ (defined as the localization of $R$ at the set of non-zerodivisors of $R\ev$). Furthermore, $$R\ev \simeq K(R)\ev= K(R\ev) .$$ 

For commutative rings, examples satisfying this condition include Artinian rings and absolutely flat rings (defined by the property that every module is flat). Notably, when $R\ev$ is a local absolutely flat ring, we can conclude that $R\ev$ is a field itself (\cite[Problem 28, p.35]{atiyah}). 

Now, absolutely flat rings coincide with commutative von Neumann regular rings, characterized by various properties including those based on ideals and weak dimension. Additionally, noncommutative graded versions of these rings have garnered recent interest due to their applications and characterization pursuits (see e.g., \cite{L}). Motivated by these recent developments, we pose the following problem:

\begin{problem}
Classify all superfields R for which the canonical superideal $\mathcal{J}_R$ is nilpotent.
\end{problem}

Solving this problem would have significant implications for the understanding UFSRs. Notably, Theorem \ref{Theorem:3.7} shows that oddly-Noetherian  UFSRs provide concrete examples of such superfields.

\section*{Acknowledgements}

\subsection*{Funding}
The first and second authors were partially supported by CODI (Universidad de Antioquia, UdeA) through project numbers 2020-33713 and 2022-52654.



\end{document}